
\documentclass[]{interact}

\usepackage{epstopdf}
\usepackage[caption=false]{subfig}



\usepackage[numbers,sort&compress]{natbib}
\bibpunct[, ]{[}{]}{,}{n}{,}{,}

\theoremstyle{plain}
\newtheorem{theorem}{Theorem}[section]
\newtheorem{lemma}[theorem]{Lemma}
\newtheorem{corollary}[theorem]{Corollary}

\theoremstyle{definition}

\theoremstyle{remark}

\begin{document}

\title{On the solutions of second order difference equations with variable coefficients}

\author{
\name{Shirali~Kadyrov\textsuperscript{a}\thanks{*Corresponding author: Nurdaulet~Shynarbek.  Email: nurdatktl@gmail.com }, Nurdaulet~Shynarbek\textsuperscript{b, *} and Alibek~Orynbassar\textsuperscript{b}}
\affil{\textsuperscript{a}Oxus University, Science and Technology, 25 Fergana Ave, Tashkent, Uzbekistan \\ \textsuperscript{b}SDU University, Mathematics and Science Education, 1/1 Abylay Khan, Kaskelen, Kazakhstan}
}

\maketitle

\begin{abstract}
In this article, we explore solutions to second-order linear difference equations featuring variable coefficients. By imposing mild conditions, we present closed-form solutions through the utilization of finite continued fraction representations. The proof of our results relies on elementary techniques, specifically involving factoring a quadratic shift operator. As a consequential application, we unveil two novel generalized continued fraction formulas for the mathematical constant $\pi^2$.
\end{abstract}

\begin{keywords}
Difference equations; continued fractions; non-homogeneous equations; closed form solutions; Ap{\'e}ry-like series; $\pi$
\end{keywords}

\section{Introduction}

Compared to differential equations, the field of difference equations has received comparatively less attention in mathematical research. In this note, our focus is on second-order linear difference equations of the form
\begin{equation}\label{eq:main}
    y_n - b_n y_{n-1} - a_n y_{n-2} = f_n, \quad n \ge 1,
\end{equation}
where the coefficients $(b_n)$ and $(a_n)$ are allowed to vary in the complex number domain. Solving \eqref{eq:main} becomes straightforward when the coefficients are constants, as one can represent the solutions using the form $y_n = x^n$ and find the roots of the corresponding characteristic equation, as seen in, for example, \cite{jerri2013linear}. However, the situation becomes notably intricate when dealing with non-constant coefficients, as evidenced in works such as \cite{mallik2000solution, mallik1998solutions, mallik1997solution, bucherberg, elyad05}, offering limited insights.

It is well-established that \eqref{eq:main} possesses two linearly independent solutions, and the general solution can be expressed as a linear combination of these solutions \cite{kelley2001difference}. Moreover, given initial values $y_0$ and $y_{-1}$, the uniqueness of the solution is evident.

In this note, our objective is to provide a closed-form solution for \eqref{eq:main} using finite continued fractions. In the context of difference equations, \emph{a closed-form solution} refers to an explicit mathematical expression that directly provides the solution for the dependent variable at any given discrete time point, without the need for iterative or recursive calculations. Unlike recursive or iterative solutions, which require step-by-step computations from an initial condition, a closed-form solution allows for direct calculation of the solution at any desired time step. We leverage this solution to validate two conjectures obtained from {www.ramanujanmachine.com}. The primary outcome of our study is encapsulated in the following main result.

\begin{theorem} \label{thm:main}
Assume that there exist sequences $(c_n)$ and $(d_n)$ of complex numbers satisfying
\begin{align}
   c_n+ d_{n+1}&=b_n, \label{eqn:d+c}\\
    c_n d_n &=-a_n, \label{eqn:dc}
\end{align}
for any $n \ge 1.$ Then, the solution to the second order equation \eqref{eq:main} with initial values $y_{-1}$ and $y_0$ is given by
\begin{equation}
y_{n}=\sum_{i=1}^{n+1} \left( \sum_{j=1}^{i-1} f_j \prod_{k=j+1}^{i-1} c_k+(y_0-d_1 y_{-1})\prod_{k=1}^{i-1} c_k\right) \prod_{j=i+1}^{n+1} d_j +y_{-1}\prod_{j=1}^{n+1} d_j, \,\,\, \forall n \ge 1,
\end{equation}
\end{theorem}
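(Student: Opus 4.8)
The plan is to exploit the factorization hypothesis to split the second-order operator into two commuting-in-structure first-order operators, turning the problem into two nested telescoping recurrences.

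First I would verify the algebraic factorization that the two hypotheses are engineered to produce. Using \eqref{eqn:d+c} and \eqref{eqn:dc}, one checks directly that
\[
y_n - b_n y_{n-1} - a_n y_{n-2} = \bigl(y_n - d_{n+1} y_{n-1}\bigr) - c_n\bigl(y_{n-1} - d_n y_{n-2}\bigr),
\]
since expanding the right-hand side gives $y_n - (c_n + d_{n+1})y_{n-1} + c_n d_n y_{n-2}$, and substituting $c_n + d_{n+1} = b_n$ and $c_n d_n = -a_n$ recovers the left-hand side. This is exactly the factoring of the quadratic shift operator promised in the abstract.

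This identity suggests introducing the auxiliary sequence $z_n := y_n - d_{n+1} y_{n-1}$ for $n \ge 0$, so that $z_{n-1} = y_{n-1} - d_n y_{n-2}$. With this substitution the second-order equation \eqref{eq:main} collapses to the first-order linear recurrence $z_n - c_n z_{n-1} = f_n$ for $n \ge 1$, subject to the initial datum $z_0 = y_0 - d_1 y_{-1}$ forced by the prescribed values $y_{-1}, y_0$.

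Next I would solve the two resulting first-order recurrences by iteration. A routine induction on the relation $z_n = c_n z_{n-1} + f_n$ yields
\[
z_n = (y_0 - d_1 y_{-1}) \prod_{k=1}^{n} c_k + \sum_{j=1}^{n} f_j \prod_{k=j+1}^{n} c_k,
\]
with the usual convention that an empty product equals $1$. Having recovered $z_n$, it remains to solve the second first-order recurrence $y_n = d_{n+1} y_{n-1} + z_n$ starting from $y_{-1}$; iterating this one likewise gives
\[
y_n = y_{-1} \prod_{j=1}^{n+1} d_j + \sum_{i=0}^{n} z_i \prod_{j=i+2}^{n+1} d_j.
\]

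Finally I would substitute the closed form for $z_i$ into this expression and reindex the outer sum by replacing $i$ with $i-1$, which brings the products into the ranges $\prod_{k=1}^{i-1} c_k$ and $\prod_{j=i+1}^{n+1} d_j$ that appear in the statement; the doubly-nested sum then matches the claimed formula term by term. The main obstacle, and essentially the only delicate part, is this final index bookkeeping: one must track the shifted product ranges and the empty-product conventions carefully so that nothing is dropped or double-counted. Since the introduction already records that \eqref{eq:main} has a unique solution once $y_{-1}$ and $y_0$ are fixed, the sequence we have constructed is necessarily that solution, completing the argument.
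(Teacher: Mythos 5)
Your proof is correct and takes essentially the same route as the paper: your identity $y_n - b_n y_{n-1} - a_n y_{n-2} = \bigl(y_n - d_{n+1} y_{n-1}\bigr) - c_n\bigl(y_{n-1} - d_n y_{n-2}\bigr)$ is exactly the paper's factorization $(T-c_n)(T-d_n)y_{n-2}$ of the shift operator, and your auxiliary sequence $z_n$ is the paper's $w_{n-1}$ under a one-step index shift, with both first-order recurrences then solved by the same iteration. The only differences are cosmetic: you avoid the operator formalism and spell out the final substitution and reindexing, which the paper leaves implicit.
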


The solution provided in Theorem~\ref{thm:main} highlights the elegance of expressing the solution to a second-order linear difference equation with variable coefficients in terms of sequences $(c_n)$ and $(d_n)$. The structure of the solution, involving nested sums and products, reveals the intricate interplay between the coefficients and the initial values, offering a versatile tool for understanding and characterizing the behavior of such equations.

It is noteworthy that the conditions \eqref{eqn:d+c} and \eqref{eqn:dc} on the sequences $(c_n)$ and $(d_n)$ provide a systematic framework for determining the solution, emphasizing the crucial role of these auxiliary sequences in the formulation of the general solution.

Theorem~\ref{thm:main} not only presents a mathematical result but also opens avenues for exploring similar techniques in the study of higher-order difference equations and their applications in diverse mathematical contexts.

The proof methodology relies on the factorization of the shift operator, and it is not a novel concept. Similar approaches have been explored in existing literature, as evidenced by references such as \cite{kelley2001difference, parasidis2018factorization}.

As an immediate consequence, setting $f_n=0$ allows us to derive the following solution for the corresponding homogeneous equation.
\begin{theorem} \label{thm:mainh}
Assume that there exist sequences $(c_n)$ and $(d_n)$ of complex numbers satisfying \eqref{eqn:d+c} and \eqref{eqn:dc} for any $n \ge 1.$ Then, the solution to the second order homogeneous equation
\begin{equation}\label{eq:mainh}
    y_n-b_n y_{n-1}-a_n y_{n-2}=0,\,\, n \ge 1 
\end{equation}
with initial values $y_{-1}$ and $y_0$ is given by
\begin{equation}
y_{n}=(y_0-d_1 y_{-1})\sum_{i=1}^{n+1}\prod_{j=1}^{i-1} c_i \prod_{j=i+1}^{n+1} d_j +y_{-1}\prod_{j=1}^{n+1} d_j, \,\,\, \forall n \ge 1.
\end{equation}
\end{theorem}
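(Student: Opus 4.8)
The plan is to derive Theorem~\ref{thm:mainh} as a direct specialization of Theorem~\ref{thm:main}, exactly as the surrounding text announces. The homogeneous equation \eqref{eq:mainh} is nothing but \eqref{eq:main} with $f_n=0$ for every $n\ge 1$, and the hypotheses \eqref{eqn:d+c} and \eqref{eqn:dc} on the auxiliary sequences $(c_n)$ and $(d_n)$ are word-for-word identical in the two statements. Consequently no fresh verification of the factorization of the shift operator is required: I would simply invoke Theorem~\ref{thm:main} for the same initial data $y_{-1},y_0$ and substitute $f_j=0$ into its closed-form expression.

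Concretely, within the general formula the entire contribution of the source term lives in the inner sum $\sum_{j=1}^{i-1} f_j \prod_{k=j+1}^{i-1} c_k$. Setting every $f_j$ equal to zero annihilates this sum for each index $i$, so the bracketed quantity collapses to $(y_0-d_1 y_{-1})\prod_{k=1}^{i-1} c_k$. The formula of Theorem~\ref{thm:main} then reduces to
\begin{equation*}
y_{n}=\sum_{i=1}^{n+1} (y_0-d_1 y_{-1})\prod_{k=1}^{i-1} c_k \prod_{j=i+1}^{n+1} d_j +y_{-1}\prod_{j=1}^{n+1} d_j.
\end{equation*}
Since the factor $(y_0-d_1 y_{-1})$ does not depend on the summation index $i$, I would pull it outside the sum, reading $\prod_{k=1}^{i-1}c_k$ as the intended meaning of the product displayed in the statement, to obtain precisely the asserted expression.

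The step most worth stating carefully is the empty-product bookkeeping at the extreme values of $i$: for $i=1$ the factor $\prod_{k=1}^{0} c_k$ is the empty product $1$, and for $i=n+1$ the tail $\prod_{j=n+2}^{n+1} d_j$ is likewise $1$, so the boundary terms behave as the convention demands. I do not anticipate a genuine obstacle here, since the existence and uniqueness of the solution for prescribed $y_{-1},y_0$ was already recorded in the introduction; the homogeneous identity therefore needs only this specialization of Theorem~\ref{thm:main} rather than an independent induction.
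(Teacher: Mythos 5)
Your proposal is correct and takes exactly the paper's route: the paper itself derives Theorem~\ref{thm:mainh} as an immediate consequence of Theorem~\ref{thm:main} by setting $f_n=0$, which is precisely the specialization you carry out. Your bookkeeping of the empty products and your reading of the misprinted $\prod_{j=1}^{i-1} c_i$ as $\prod_{j=1}^{i-1} c_j$ are both sound.
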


We remark that inductively it easy to see that the sequences $(c_n)$ and $(d_n)$ satisfying \eqref{eqn:d+c} and \eqref{eqn:dc} exist for most sequences $(a_n)$ and $(b_n)$.

We also note that, we have flexibility in choosing $d_1$. This does not mean that the difference equation \eqref{eq:main} has more than one solutions. Instead, it gives different representation of the same solution. In particular, when $y_{-1}$ is non-zero, we take $d_1$ such that $y_0-d_1 y_{-1}=0$ yielding to a simpler represented solution.

\begin{corollary}\label{cor:main}
Let $(c_n)$ and $(d_n)$ be given as in Theorem~\ref{thm:main}. If $y_{-1} \ne 0$, then the difference equation \eqref{eq:main} has the solution 
\begin{equation}
y_{n}=y_{-1}\prod_{j=1}^{n+1} d_j, \,\,\, \forall n \ge 1,
\end{equation}
with $d_1=y_0/y_{-1}.$
\end{corollary}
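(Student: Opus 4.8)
The plan is to obtain the corollary as a specialization of the closed-form solution already in hand, choosing the free parameter $d_1$ so as to annihilate the bulk of the formula. Because $y_{-1} \ne 0$ by hypothesis, the value $d_1 = y_0/y_{-1}$ is well defined, and the decisive observation is simply that this choice forces $y_0 - d_1 y_{-1} = 0$. First I would record that, given $d_1$, the sequences $(c_n)$ and $(d_n)$ are still available: relations \eqref{eqn:dc} and \eqref{eqn:d+c} at $n = 1$ determine $c_1 = -a_1/d_1$ and $d_2 = b_1 - c_1$, and the later terms are generated recursively, exactly as in the existence remark following Theorem~\ref{thm:mainh}.

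I would then substitute $d_1 = y_0/y_{-1}$ into the solution formula of Theorem~\ref{thm:mainh}. Every term of the double sum $\sum_{i=1}^{n+1}\prod_{j=1}^{i-1} c_i\,\prod_{j=i+1}^{n+1} d_j$ carries the common factor $(y_0 - d_1 y_{-1})$, which now vanishes, so the entire sum drops out and only the boundary contribution $y_{-1}\prod_{j=1}^{n+1} d_j$ survives, which is the asserted closed form. The same specialization applied to Theorem~\ref{thm:main} removes the $(y_0 - d_1 y_{-1})$ piece as well, leaving only the forcing part; thus the reduction to $y_{-1}\prod_{j=1}^{n+1} d_j$ is precisely the homogeneous instance $f_n \equiv 0$.

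As an independent check I would substitute the candidate $y_n = y_{-1}\prod_{j=1}^{n+1} d_j$ directly into the recurrence. Writing $y_{n-1}$ and $y_{n-2}$ by peeling off the last one and two factors, the left-hand side factors as $y_{-1}\bigl(\prod_{j=1}^{n-1} d_j\bigr)\bigl(d_n d_{n+1} - b_n d_n - a_n\bigr)$, and substituting $b_n = c_n + d_{n+1}$ and $a_n = -c_n d_n$ from \eqref{eqn:d+c} and \eqref{eqn:dc} collapses the bracket to zero; the initial data are matched since the empty product gives $y_{-1}$ at $n = -1$ and $y_{-1} d_1 = y_0$ at $n = 0$. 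The only point requiring care is the admissibility of the choice $d_1 = y_0/y_{-1}$: it must be nonzero for $c_1 = -a_1/d_1$ to make sense, so the argument tacitly assumes $y_0 \ne 0$ (or $a_1 = 0$), which is the main thing to flag rather than a genuine obstacle.
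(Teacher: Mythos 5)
Your proposal is correct and takes essentially the same route as the paper, whose (implicit) proof of the corollary is exactly your first step: choose $d_1=y_0/y_{-1}$ so that $y_0-d_1y_{-1}=0$, which collapses the closed form of Theorem~\ref{thm:mainh} to the single product $y_{-1}\prod_{j=1}^{n+1}d_j$. Your two side observations are also apt: the collapsed formula solves \eqref{eq:main} only in the homogeneous instance $f_n\equiv 0$ (so the corollary really concerns \eqref{eq:mainh}, consistent with how the paper later applies it to the recurrences for $A_n$ and $B_n$), and the prescribed value $d_1=y_0/y_{-1}$ must be nonzero (i.e.\ $y_0\ne 0$, or else $a_1=0$) for a sequence $(c_n)$ satisfying \eqref{eqn:dc} to exist.
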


In order to provide the closed form solutions, one needs to find the closed form solutions to \eqref{eqn:d+c} and \eqref{eqn:dc}. In the special case, when $a_n \ne 0$ for any $n \ge 1,$ we clearly have $d_n \ne 0$ for $n\ge 0$ and can take $d_1 \ne 0.$ In this case, substituting into \eqref{eqn:dc} we get $(b_n-d_{n+1})d_n=-a_n.$ Solving for $d_{n+1}$ we get the recursive relation 
  $$d_{n+1}=b_n+\frac{a_n}{d_n}.$$
  Iterating the equation we have generalized continued fraction representations for the sequences $(c_n)$ and $(d_n)$ as follows.
\begin{lemma}
 Fix any non-zero complex number $d_1$ and for $n \ge 1$ define
\begin{equation}\label{eqn:dandc}
d_{n+1}=b_n+\cfrac{a_n}{b_{n-1}+\cfrac{a_{n-1}}{b_{n-2} + \cdots + \cfrac{a_2}{ b_1+\cfrac{a_1}{d_1}}}} \text{ and } c_{n}=-\cfrac{a_n}{b_{n-1}+\cfrac{a_{n-1}}{b_{n-2} + \cdots + \cfrac{a_2}{ b_1+\cfrac{a_1}{d_1}}}}.
\end{equation}
Then, provided the sequence $(d_n)$ is well-defined, $(d_n)$ and $(c_n)$ satisfy \eqref{eqn:d+c} and \eqref{eqn:dc}.
\end{lemma}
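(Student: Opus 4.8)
The plan is to recognize the two nested continued fractions in \eqref{eqn:dandc} as the unrolled form of the single recursion $d_{n+1}=b_n+a_n/d_n$, and then to read off \eqref{eqn:d+c} and \eqref{eqn:dc} by elementary algebra. This recursion is exactly the one derived in the paragraph preceding the lemma, where $(b_n-d_{n+1})d_n=-a_n$ was solved for $d_{n+1}$.

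First I would establish that the fraction displayed for $d_{n+1}$ equals $b_n+a_n/d_n$, where $d_n$ denotes the fraction obtained by deleting the outermost level $b_n+a_n/(\,\cdot\,)$ (with $d_1$ the given starting value). This is an induction on $n$: the base case is $d_2=b_1+a_1/d_1$, the innermost two levels of the fraction, and for the inductive step one observes that the continued fraction for $d_{n+1}$ is precisely $b_n+a_n/(\text{tail})$, with the tail being exactly the continued fraction representing $d_n$. Since, by the standing hypothesis that $(d_n)$ is well-defined, every denominator that occurs is non-zero, the expression $b_n+a_n/d_n$ is legitimate and agrees with the displayed fraction. The same computation identifies the displayed fraction for $c_n$ as $-a_n/d_n$, because its denominator is again the continued fraction representing $d_n$ (leading coefficient $b_{n-1}$), not that of $d_{n+1}$.

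With the recursion $d_{n+1}=b_n+a_n/d_n$ and the identity $c_n=-a_n/d_n$ in hand, both relations are immediate. Multiplying $c_n=-a_n/d_n$ through by $d_n$ gives $c_n d_n=-a_n$, which is \eqref{eqn:dc}. Rearranging the recursion as $a_n/d_n=d_{n+1}-b_n$ gives $c_n=-a_n/d_n=b_n-d_{n+1}$, that is $c_n+d_{n+1}=b_n$, which is \eqref{eqn:d+c}. I do not expect any genuine obstacle; the content is elementary once the recursion is identified. The only point demanding care is the index bookkeeping in the first step: the truncation levels must be matched so that the fraction assigned to $c_n$ is $-a_n/d_n$ rather than, say, $-a_n/d_{n+1}$, and getting this alignment right is exactly what forces \eqref{eqn:d+c} and \eqref{eqn:dc} to come out with the correct indices.
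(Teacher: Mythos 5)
Your proof is correct and matches the paper's approach: the paper presents this lemma as an immediate consequence of the recursion $d_{n+1}=b_n+a_n/d_n$ derived in the preceding paragraph, which is exactly the recursion you identify by unrolling the continued fraction. Your write-up merely makes explicit (via the induction on truncation levels and the identification $c_n=-a_n/d_n$) what the paper leaves implicit, and runs the algebra in the logically proper direction, from the definitions \eqref{eqn:dandc} to the relations \eqref{eqn:d+c} and \eqref{eqn:dc}.
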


Next, we turn to applying Theorem~\ref{thm:main} to obtain new generalized continued fraction representations to $\pi^2$. Universal constant $\pi$ is one of the historically well studied numbers \cite{debnath2015brief}. Representing various mathematical constants with generalized continued fractions is not new. However, recently a machine learning based computer algorithm is developed to generate lots of seemingly new conjectures on continued fraction representations \cite{raayoni2021generating}. These generated conjectures are posted at {www.ramanujanmachine.com} attracting new investigations in the theory of continued fractions \cite{lu2019elementary,kadyrov2019generalized,dougherty2021automatic,lynch2020derangements,reyes2019simple}. Using Theorem~\ref{thm:mainh} we prove the following results stated as conjectures in the site. 

\begin{theorem} \label{thm:8pi2} We have
$$\frac{8}{\pi^2}=b_0+\cfrac{a_1}{b_{1}+\cfrac{a_{2}}{b_2 + \ddots}},$$
for the sequences given by $b_n = 3n(n + 1) + 1, n \ge 0$ and $a_n = -(2n - 1)n^3, n \ge 1.$
\end{theorem}

\begin{theorem} \label{thm:18pi2} We have
$$\frac{18}{\pi^2}=b_0+\cfrac{a_1}{b_{1}+\cfrac{a_{2}}{b_2 + \ddots}},$$
for the sequences given by $b_n =  n(5n + 6) + 2, n \ge 0$ and $a_n = -4n^4+ 2n^3 , n \ge 1.$
\end{theorem}

Both Theorem~\ref{thm:8pi2} and Theorem~\ref{thm:18pi2} establish a remarkable connection between the continued fraction coefficients and the mathematical constant $\pi^2$. This convergence not only exemplifies the power of the proposed solution method for second-order homogeneous equations but also underscores the intriguing interplay between difference equations and transcendental numbers.

Moreover, the representation of this limit as $\frac{18}{\pi^2}$ aligns with established mathematical relationships, emphasizing the profound and unexpected occurrences that can emerge from the study of such equations. The convergence of continued fractions to well-known mathematical constants serves as a testament to the richness of mathematical structures and the elegance of their interconnections.

\section{Preliminaries}

The current work can be seen as an interdisciplinary study that connects the theory of difference equations with that of continued fractions. To facilitate understanding, we will provide some background information.

We first establish the existence and uniqueness of solutions for the given non-homogeneous difference equations with specified initial values. The theorem presented below encapsulates our findings.

\begin{theorem}
    Consider any sequence of numbers $a_n, b_n, f_n$ where $n \ge 1$. The non-homogeneous difference equation
    $$y_n-b_ny_{n-1}-a_n y_{n-2}=f_n, n\ge 1$$
    with initial values $y_0$ and $y_{-1}$ possesses unique solution.
\end{theorem}

This result stands as a well-established theorem within the realm of difference equations theory, and its acceptance has been documented in the literature. A notable reference supporting this theorem is \cite[Theorem~2.7]{elyad05}, where a broader scope, encompassing $n$'th order linear difference equations, is considered. For completeness, we provide a proof.
\begin{proof}
By setting $n=1$ and manipulating the difference equation for $y_1$, we arrive at
$$y_1=f_1 + b_1 y_0 + a_1y_{-1}.$$
Given the values of $a_1, b_1, f_1,$ and the initial conditions $y_0, y_{-1}$, it is evident that $y_1$ not only exists but is also uniquely determined. Building on this foundation, an inductive argument reveals that once $y_n$ and $y_{n-1}$ are established for $n \ge 1$, the equation $y_n = f_n + b_n y_{n-1} + a_n y_{n-2}$ uniquely specifies the value of $y_n$. 
\end{proof}

As a direct implication of the aforementioned theorem, it is noteworthy that setting $f_n = 0$ gives the corresponding result for homogeneous difference equations. Specifically, when considering the homogeneous linear difference equation $y_n - b_ny_{n-1} - a_n y_{n-2} = 0$ for $n \ge 1$, with prescribed initial values $y_0$ and $y_{-1}$, we deduce that this equation possesses a unique solution $(y_n)_{n \ge 1}$.

We know briefly discuss the connection of the generalized continued fractions to the theory of difference equations. In mathematical analysis, the interplay between continued fractions and difference equations has proven to be a fascinating and fruitful area of study. A noteworthy relationship emerges in expressing the convergents of a generalized continued fraction as solutions to difference equations.

Let $x$ be a real number. A generalized continued fraction for $x$ is an expression of the form:
\begin{equation}  \label{eqn:conv}
x = b_0 + \cfrac{a_1}{b_1 + \cfrac{a_2}{b_2 + \cfrac{a_3}{b_3 + \ddots}}}
\end{equation}
where $a_i$ and $b_i$ are integers for $i \geq 0$. These expressions often arise in the approximation and representation of various mathematical functions.

The \emph{convergents} of this continued fraction, denoted by \(A_n/B_n\), for $n \ge 1$ are defined as follows:

\begin{equation*}
\frac{A_n}{B_n} = b_0 + \cfrac{a_1}{b_1 + \cfrac{a_2}{b_2 + \cfrac{a_3}{b_3 + \ddots +\cfrac{a_n}{b_n}}}},
\end{equation*}
where the fraction $A_n/B_n$ is assumed in lowest terms. The convergents of the continued fraction often provide increasingly accurate approximations to the value \(x\). The relationship between convergents and the continued fraction is a key aspect of their analytical properties. Then, we say that the generalized continued fraction converges to $x$ provided 
$$x = \lim_{n \to \infty} \frac{A_n}{B_n}.$$
Hence, it is crucial to calculate $A_n$ and $B_n$ to prove that the given continued fraction representation. The following well-known result from the theory of continued fractions links the convergents to difference equations. see e.g. \cite{JT84}.
\begin{theorem} \label{thm:convergents}
    Given sequences $a_n$ and $b_n$ for $n \ge 1$, let $A_n$ and $B_n$ be the unique solutions to the difference equations with initial values
\begin{equation} \label{eqn:difference}
\begin{aligned}
    A_n &= b_nA_{n-1} + a_nA_{n-2}, & B_n &= b_nB_{n-1} + a_nB_{n-2}\\
    A_0 &= b_0, A_{-1}=1 & B_0 &= 1, B_{-1}=0.
\end{aligned}
\end{equation}
Then, the sequence $A_n/B_n$ represents the convergents for the generalized continued fraction \eqref{eqn:conv} provided the convergents are well-defined.
\end{theorem}

\begin{proof}
We proceed by induction. For $n=0$, we have $A_0/B_0 = b_0/1$, which corresponds to the first term in the continued fraction. 

For $n=1$, we find $A_1 = b_1 b_0 + a_1$ and $B_1 = b_1$. Referring to the continued fraction, we have $b_0 + \frac{a_1}{b_1} = \frac{b_0b_1 + a_1}{b_1}$, matching with $A_1/B_1$. 

Assuming that the solution of the initial value problem satisfies
$$\frac{A_n}{B_n} = b_0 + \cfrac{a_1}{b_1 + \cfrac{a_2}{b_2 + \cfrac{a_3}{b_3 + \ddots + \cfrac{a_n}{b_n}}}},$$
then, based on the inductive hypothesis, the convergent for $n+1$ is given by
$$b_0 + \cfrac{a_1}{b_1 + \cfrac{a_2}{b_2 + \cfrac{a_3}{b_3 + \ddots + \cfrac{a_n}{b_n + \frac{a_{n+1}}{b_{n+1}}}}}} = \frac{\left(b_n + \frac{a_{n+1}}{b_{n+1}}\right)A_{n-1} + a_n A_{n-2}}{\left(b_n + \frac{a_{n+1}}{b_{n+1}}\right)B_{n-1} + a_n B_{n-2}}.$$

Simplifying the expression yields
$$\frac{A_n + \frac{a_{n+1}}{b_{n+1}}A_{n-1}}{B_n + \frac{a_{n+1}}{b_{n+1}}B_{n-1}} = \frac{b_{n+1}A_n + a_{n+1} A_{n-1}}{b_{n+1}B_n + a_{n+1} B_{n-1}},$$
which, by the difference equation, is equivalent to $\frac{A_{n+1}}{B_{n+1}}$.
\end{proof}
A noteworthy observation stemming from Theorem~\ref{thm:convergents} is the striking symmetry in the structure of the difference equations \eqref{eqn:difference} for sequences $A_n$ and $B_n$. The equations are identical, differing only in their initial conditions. This seemingly subtle distinction manifests itself in the generation of distinct solutions, emphasizing the nuanced impact of initial values on the sequences $A_n$ and $B_n$.

The result established in Theorem~\ref{thm:convergents} elucidates a compelling connection between sequences $a_n$ and $b_n$ and the convergents of a generalized continued fraction. The solution to the difference equations \eqref{eqn:difference}, encapsulated in the sequences $A_n$ and $B_n$, serves as a fundamental building block for expressing the convergents in the form $A_n/B_n$.

The stipulation that the convergents must be well-defined emphasizes the importance of certain conditions for the sequences $a_n$ and $b_n$ to ensure the convergence of the continued fraction \eqref{eqn:conv}. The theorem not only establishes a theoretical link but also provides a practical avenue for computing convergents in terms of solutions to difference equations, thereby offering a unified perspective on these mathematical structures.

The broader implications of Theorem~\ref{thm:convergents} extend to various areas, including continued fraction theory and numerical analysis, where understanding the convergents is crucial for approximating real numbers and revealing deeper mathematical insights.

\section{Proof of Theorem~\ref{thm:main}}
In this section, we prove our main result. To this end we introduce the \emph{shift operator} $T$ given by
$$T y_n=y_{n+1},$$
for any sequence. We note that \eqref{eq:main} can be rewritten with shift operator $T$ as follows
\begin{equation}
    y_n-b_n y_{n-1}-a_n y_{n-2}= (T^2-b_n T -a_n) y_{n-2} = 0.
\end{equation}

\begin{lemma} \label{lem:split}
Let $(c_n)$ and $(d_n)$ be sequences as in Theorem~\ref{thm:main}.  Then, \eqref{eq:main} is equivalent to
\begin{equation}
    (T-c_n)(T-d_n)y_{n-2}=f_n.
\end{equation}
\end{lemma}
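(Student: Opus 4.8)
The plan is to verify the operator identity directly by expanding the factored expression and matching it against the reformulation $(T^2-b_nT-a_n)y_{n-2}=y_n-b_ny_{n-1}-a_ny_{n-2}$ already recorded above. Since both the factored form and the original form are applied to the same term $y_{n-2}$, it suffices to show that $(T-c_n)(T-d_n)y_{n-2}$ produces exactly $y_n-b_ny_{n-1}-a_ny_{n-2}$; equality of the two expressions then gives the asserted equivalence, since one vanishes precisely when the other does.

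First I would apply the inner operator, obtaining $(T-d_n)y_{n-2}=y_{n-1}-d_ny_{n-2}$, and then apply the outer operator $(T-c_n)$ to this expression. The one genuinely delicate point is the interaction of the shift $T$ with the variable coefficient $d_n$: applying $T$ increments the index throughout, so that $T(y_{n-1}-d_ny_{n-2})=y_n-d_{n+1}y_{n-1}$, with $d_n$ promoted to $d_{n+1}$, whereas multiplication by $c_n$ leaves every index unchanged. Carrying this out yields
\begin{equation*}
(T-c_n)(T-d_n)y_{n-2}=y_n-(c_n+d_{n+1})y_{n-1}+c_nd_ny_{n-2}.
\end{equation*}

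Finally I would substitute the two hypotheses \eqref{eqn:d+c} and \eqref{eqn:dc}, namely $c_n+d_{n+1}=b_n$ and $c_nd_n=-a_n$, into the coefficients of $y_{n-1}$ and $y_{n-2}$. This collapses the right-hand side to $y_n-b_ny_{n-1}-a_ny_{n-2}$, which is precisely the left-hand side of \eqref{eq:main}. Hence $(T-c_n)(T-d_n)y_{n-2}=0$ holds if and only if \eqref{eq:main} holds, which is the claimed equivalence.

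I anticipate that the main obstacle is purely bookkeeping: keeping the index convention consistent so that $T$ shifts both the $y$-indices and the coefficient index of $d$ upward by one. This shift is exactly why the hypothesis \eqref{eqn:d+c} is phrased with $d_{n+1}$ rather than $d_n$; misreading it as $d_n$ would break the match, while everything else reduces to a single short linear expansion.
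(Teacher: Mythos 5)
Your proposal is correct and follows essentially the same route as the paper: expand $(T-c_n)(T-d_n)y_{n-2}$ to $y_n-(c_n+d_{n+1})y_{n-1}+c_nd_ny_{n-2}$, noting that $T$ promotes the coefficient $d_n$ to $d_{n+1}$, and then match coefficients via \eqref{eqn:d+c} and \eqref{eqn:dc}. You also correctly flag the non-commutativity of $T$ with variable coefficients, which is exactly the subtle point the paper emphasizes.
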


\begin{proof}
  We have 
  \begin{align*}
      (T-c_n)(T-d_n)y_{n-2}&=(T-c_n)(y_{n-1}-d_n y_{n-2})\\
      &=y_n-(d_{n+1}+c_n) y_{n-1}  + c_n d_n y_{n-2}.
  \end{align*}
  Comparing with \eqref{eq:main} we see that $d_{n+1}+c_n=b_n$ and $c_n d_n =-a_n$ as required. 
\end{proof}

We note non-commutative nature of $T$ that in general $(T-c_n)(T-d_n)y_{n-2}=(T-d_n)(T-c_n)y_{n-2}$, which is the reason why \eqref{eqn:d+c} is not symmetric. We now prove Theorem~\ref{thm:main}.

\begin{proof}[Proof of Theorem~\ref{thm:main}]
Let $(c_n)$ and $(d_n)$ be given as in the theorem statement. Using Lemma~\ref{lem:split} we rewrite \eqref{eq:main} as $(T-c_n)(T-d_n)y_{n-2}=f_n$. Let us define $(w_{n})_{n \ge -1}$ by 
  \begin{equation} \label{eqn:w}
      w_{n-2}:=(T-d_n)y_{n-2}
  \end{equation} so that $(T-c_n)w_{n-2}=f_n.$ The latter gives $w_{n-1}=f_n+c_n w_{n-2} $ for any $n \ge 1$. Iteration yields
  $$w_{n-1}= \sum_{i=0}^n f_i \prod_{j=i+1}^{n} c_j+w_{-1}\prod_{j=1}^{n} c_j,$$
  where $w_{-1}$ is obtained from \eqref{eqn:w}. 
  $$w_{-1}=y_0-d_1 y_{-1}.$$
  Now, applying \eqref{eqn:w} we arrive at a first order difference equation with variable coefficients
  $$y_{n-1}=w_{n-2}+d_n y_{n-2}.$$
  Iterating yields the closed form solution
  $$y_{n-1}=\sum_{i=1}^n w_{i-2} \prod_{j=i+1}^n d_j +y_{-1}\prod_{j=1}^n d_j.$$
  This finishes the proof.
\end{proof}

\section{Generalized continued fraction representations of $\pi^2$}
In this section, we will leverage the link established between convergents $A_n/B_n$ and difference equations \eqref{eqn:difference}, as presented in Theorem~\ref{thm:convergents}, to derive novel generalized continued fractions for $\pi^2$.

\begin{proof}[Proof of Theorem~\ref{thm:8pi2}]

Since $A_{-1} \ne 0,$ we may apply Corollary~\ref{cor:main} with $d_1=A_0/A_{-1}=b_0=1$ to obtain
$$A_n=\prod_{j=1}^{n+1}d_j,$$
with $d_j$ as in \eqref{eqn:dandc}. We have $d_2=b_1+a_1=7-1=6$. Inductively, if $d_n=n(2n-1)$ we see that 
$$d_{n+1}=b_n+\frac{a_n}{d_n}=3n(n + 1) + 1-\frac{ (2n - 1)n^3}{n(2n-1)}=(n+1)(2n+1).$$
Hence, $A_n=\prod_{j=1}^{n+1} j(2j-1)=(n+1)! (2n+1)!!=(2(n+1))!/2^{n+1}.$

As for $B_n$ we have $B_{-1}=0$ and $B_{0}=1$ which gives the solution

$$B_n=\sum_{i=1}^{n+1}\prod_{j=1}^{i-1} c_i \prod_{j=i+1}^{n+1} d_j.$$
We may take $d_1=1$ as before and arrive at 
$$d_n=n(2n-1)=2n(2n-1)/2,$$
which in turn gives
$$c_n=b_n-d_{n+1}=3n^2+3n+1-2n^2-3n-1=n^2.$$
Thus,
$$\prod_{j=1}^{i-1} c_i \prod_{j=i+1}^{n+1} d_j=\frac{(i-1)!^2 (n+1)!(2n+1)!!}{i!(2i-1)!!}=A_n \frac{(i-1)!^2}{i!(2i-1)!!} =A_n \frac{i!^2 2^{i}}{i^2(2i)!},$$
where we use the fact that $i!(2i-1)!!=(2i)!/2^{i}.$ Therefore, we can express $B_n$ as $B_n=A_n \sum_{i=1}^{n+1}\frac{i!^2 2^{i}}{i^2(2i)!}$. Subsequently, utilizing Theorem~\ref{thm:convergents} and referencing \cite[Eq.3.78]{sherman2000summation}, we can deduce that
$$\lim_{n \to \infty} \frac{A_n}{B_n}=\frac{1}{\sum_{i=1}^{\infty}\frac{i!^2 2^{i}}{i^2(2i)!}}=\frac{8}{\pi^2}. \qedhere$$
\end{proof}

\begin{proof}[Proof of Theorem~\ref{thm:18pi2}]
The proof of Theorem~\ref{thm:18pi2} closely parallels the outlined approach, and we provide a concise summary here, omitting the detailed steps.

Choosing $d_1 = A_0/A_{-1} = b_0 = 2$, it becomes evident that $d_n = 2n(2n-1)$, resulting in
\[ A_n = \prod_{j=1}^{n+1}d_j = (2n+2)! \]

Utilizing the same variable $d_n$ to derive $B_n$, we note that $c_n = b_n - d_{n+1} = n^2$. This leads to the expression:
\[ B_n = \sum_{i=1}^{n+1}\prod_{j=1}^{i-1} c_i \prod_{j=i+1}^{n+1} d_j = A_n \sum_{i=1}^{n+1}\frac{(i-1)!^2 (2n+2)!}{(2i)!}. \]

By combining this result with the equation from \cite[Eq.3.75]{sherman2000summation} and leveraging Theorem~\ref{thm:convergents}, we obtain:
\[ \lim_{n \to \infty} \frac{A_n}{B_n} = \frac{1}{\sum_{i=1}^{\infty}\frac{(i-1)!^2 }{(2i)!}} = \frac{18}{\pi^2}. \qedhere \]
\end{proof}

\section{Discussion and Conclusion}

In this study, we introduced an alternative approach to solve second-order homogeneous equations with variable coefficients, departing from the methodology presented in \cite{mallik1997solution}. The core concept involves factoring the quadratic shift operator into linear operators and expressing the solution through terms featuring finite continued fractions. There exists potential for exploring analogous methods for higher-order equations.

While the proposed approach demonstrates simplicity and efficacy, particularly in applications to generalized continued fraction representations, it does not encompass all second-order linear difference equations. Consequently, an intriguing avenue for further research involves discerning the conditions under which this method is applicable. Specifically, it would be valuable to investigate a necessary and sufficient condition for the sequences $c_n$ and $d_n$ that satisfy \eqref{eqn:d+c} and \eqref{eqn:dc} to exist.

It is worth noting that solving second-order homogeneous equations with variable coefficients aids in converting infinite generalized continued fractions into series representations. However, it does not necessarily facilitate the determination of the limit. Consequently, unless the sum of the series representation is known, fully resolving the problem of representing a number with continued fractions remains an open challenge.

\section*{Disclosure statement}

The authors declare that they have no conflict of interest.

\section*{Funding}

Authors acknowledge the support by a grant from the Ministry of Science and Higher Education of the Republic of Kazakhstan within framework of the project AP19676669

\section*{Notes on contributor(s)}

All authors contributed to the work equally.


\begin{thebibliography}{99}

\bibitem{bucherberg}
B. Buchberger, \emph{O velichinakh $S_i$, opredelennykh rekursiey $S_{i+1}=u_i S_i + v_i S_{i-1}$ }, Tech. Rep., 1971.

\bibitem{dougherty2021automatic}
R. Dougherty-Bliss and D. Zeilberger, \emph{Automatic conjecturing and proving of exact values of some infinite families of infinite continued fractions}, The Ramanujan Journal, Springer,  2021, pp. 1--17.

\bibitem{elyad05}
 S. Elaydi, \emph{An Introduction to Difference Equations}, Springer, Berlin, 2005.

\bibitem{JT84} W.B. Jones and W.J. Thron, \emph{Continued fractions: Analytic theory and applications}, Cambridge University Press, 1984.


\bibitem{jerri2013linear}
A. J. Jerri, \emph{Linear difference equations with discrete transform methods}, Vol. 363, Springer Science \& Business Media, 2013.

\bibitem{kadyrov2019generalized}
 S. Kadyrov and F. Mashurov, \emph{Generalized continued fraction expansions for $\pi $ and $ e$},Journal of Discrete Mathematical Sciences and Cryptography, Vol. 4, no. 6, pp. 1809--1819,  2021.

\bibitem{kelley2001difference}
W.G. Kelley and A.C. Peterson, \emph{Difference equations: an introduction with applications}, Academic press, 2001.

\bibitem{lu2019elementary}
Z. Lu, \emph{Elementary proofs of generalized continued fraction formulae for $e$}, arXiv preprint arXiv:1907.05563, 2019.

\bibitem{lynch2020derangements}
P. Lynch, \emph{Derangements and Continued Fractions for $ e$}, arXiv preprint arXiv:2012.12692, 2020.


\bibitem{mallik2000solution}
 R. K. Mallik, \emph{On the solution of a linear homogeneous difference equation with variable coefficients}, SIAM Journal on Mathematical Analysis, Vol. 31, no. 2, pp. 375--385, SIAM, 2000.
 
\bibitem{mallik1998solutions}
 R. K. Mallik, \emph{Solutions of linear difference equations with variable coefficients}, Journal of mathematical analysis and applications, Vol. 222, no. 1, pp. 79--91, Elsevier, 1998.

 \bibitem{mallik1997solution}
 R. K. Mallik, \emph{On the solution of a second order linear homogeneous difference equation with variable coefficients}, Journal of Mathematical Analysis and Applications, Vol. 215, no. 1, pp. 32--47, Elsevier, 1997.

\bibitem{parasidis2018factorization}
I. Parasidis and E. Providas, \emph{Factorization method for the second order linear nonlocal difference equations}, International Conference Polynomial Computer Algebra, Saint-Petersburg, Russia, Euler International Mathematical Institute, April, no. 16-22, 2018, pp. 84.

\bibitem{raayoni2021generating}
G. Raayoni, S. Gottlieb, Y. Manor, G. Pisha, Y. Harris, U. Mendlovic, D. Haviv, Y. Hadad,
and I. Kaminer, \emph{Generating conjectures on fundamental constants with the Ramanujan Machine}, Nature,  Vol. 590, no. 7844, pp. 67–-73, 2021.

\bibitem{reyes2019simple}
C. Reyes-Bustos, \emph{A simple continued fraction expansion for $ e^{n} $},arXiv preprint arXiv:1909.13597, 2019.

\bibitem{sherman2000summation}
T. Sherman, \emph{Summation of {G}laisher and {A}p{\'e}ry-like series}, 2000.

\bibitem{debnath2015brief}
L. Debnath, \emph{A Brief History of the Most Remarkable Numbers $ \pi$, $ g $ and $ \delta $ in Mathematical Sciences with Applications}, International Journal of Applied and Computational Mathematics, Vol. 1, no. 4, pp. 607--638, Springer, 2015.


\end{thebibliography}
\end{document}